
\documentclass{birkjour}
\usepackage{amsmath,amssymb,slashed}
%
%
%
 \newtheorem{thm}{Theorem}[section]
 
 \newtheorem{lem}[thm]{Lemma}
 
 \theoremstyle{definition}
 \newtheorem{defn}[thm]{Definition}
 \theoremstyle{remark}

 \numberwithin{equation}{section}
      \renewcommand{\epsilon}{\varepsilon}
\newcommand{\Z}{\mathbb{Z}}
\newcommand{\ux}{\underline{x}}
\newcommand{\uy}{\underline{y}}
\newcommand{\uz}{\underline{z}}


\newcommand{\mZ}{\mathbb{Z}}

\newcommand{\mR}{\mathbb{R}}

\newcommand{\mC}{\mathbb{C}}

\newcommand{\mS}{\mathbb{S}}
\usepackage{enumerate}

\usepackage{hyperref}

\newcommand{\mcH}{\mathcal{H}}

\newcommand{\mcP}{\mathcal{P}}



\newcommand{\uv}{\underline{v}}
\newcommand{\uu}{\underline{u}}



\newcommand{\up}{\underline{\partial}}
\newcommand{\mE}{\mathbb{E}}


\usepackage[framemethod=tikz]{mdframed}

\newcommand{\R}{\mathbb{R}}

\newcommand{\spa}{\operatorname{span}}

\newcommand{\C}{\mathbb{C}}

\begin{document}

%
%
%
%
%
%
%
%
%

\title[Branching symplectic monogenics using a M--Z  algebra]
 {Branching symplectic monogenics using \\ a Mickelsson--Zhelobenko algebra}

\author[David Eelbode]{David Eelbode}

\address{%
	Department of Mathematics \newline
University of Antwerp \newline
Middelheimlaan 1 \newline 
2020 Antwerp, Belgium}

\email{david.eelbode@uantwerpen.be}

\author{Guner Muarem}
\address{%
	Department of Mathematics \newline
	University of Antwerp \newline
	Middelheimlaan 1 \newline 
	2020 Antwerp, Belgium}
\email{guner.muarem@uantwerpen.be}
\subjclass{Primary 15A66, 17B10; Secondary 00A00}

\keywords{Branching, Symplectic Dirac operator, Mickelsson--Zhelobenko algebra, simplicial harmonics}

\date{January 1, 2004}
\dedicatory{}

\begin{abstract}
In this paper we consider (polynomial) solution spaces for the symplectic Dirac operator (with a focus on $1$-homogeneous solutions). This space forms an infinite-dimensional representation space for the symplectic Lie algebra $\mathfrak{sp}(2m)$. Because $\mathfrak{so}(m)\subset \mathfrak{sp}(2m)$, this leads to a branching problem which generalises the classical Fischer decomposition in harmonic analysis. Due to the infinite nature of the solution spaces for the symplectic Dirac operators, this is a non-trivial question: both the summands appearing in the decomposition and their explicit embedding factors will be determined in terms of a suitable Mickelsson-Zhelobenko algebra. 
\end{abstract}

\maketitle
\section{Introduction}
\noindent
The Dirac operator is a first-order differential operator acting on spinor-valued functions which factorises the Laplace operator $\Delta$ on $\mathbb{R}^m$. It was originally introduced by Dirac in a famous attempt to factorise the wave operator, hence obtaining a relativistically invariant version of the Schr\"odinger equation.  Since then, this operator has played a crucial role in mathematical domains such as representation theory and Clifford analysis. The latter is a multidimensional function theory which is often described as a refinement of harmonic analysis, and a generalisation of complex analysis. It is centred around a generalisation of the operator introduced by Dirac (his operator $\slashed{\partial}$ is defined in 4 dimensions), and can be seen as a contraction between the generators $e_k$ for a Clifford algebra (acting as endomorphisms on so-called spinors) and corresponding partial derivatives $\partial_{x_k}$. To be more precise, introducing the Clifford algebra by means of the defining relations $\{e_a,e_b\} = e_ae_b + e_be_a =-2\delta_{ab}$ (with $1 \leq a,b \leq m$) the Dirac operator is given by
\[ \up_x = \begin{pmatrix}
	e_1& \ldots &e_m
\end{pmatrix}\textup{Id}_m\begin{pmatrix}
	x_1\\
\vdots\\
x_m
\end{pmatrix} = \sum_{j = 1}^m e_j \partial_{x_j}\ , \]
whereby the $(m\times m)-$identity matrix $\textup{Id}_m$ has been added to explain what is meant by the `contraction'.  Null-solutions for $\up_x$ are called monogenics, and can be seen as generalisations of holomorphic functions. One often starts with the study of $k$-homogeneous polynomial solutions for the Dirac operator, which belong to the space $\mathcal{M}_k(\mathbb{R}^m,\mathbb{S})$, where $\mathbb{S}$ stands for the aforementioned spinor space. \par 
An obvious generalisation of the operator $\up_x$ can be obtained by using another matrix than $\textup{Id}_m$ when contracting algebraic generators with partial derivatives. An important example is the {\em symplectic} Dirac operator, which is introduced on a symplectic space rather than an orthogonal space (see for example the work of Habermann \cite{habermann}). This operator, denoted by $D_s$, is defined as a contraction between generators for a symplectic Clifford algebra and partial derivatives, using a skew-symmetric matrix $\Omega_0$ (rather than $\mbox{Id}_m$). The symplectic Clifford algebra generators satisfy the Heisenberg relations $[\partial_{z_j},z_k]=\delta_{jk}$ (the symplectic analogue of the Clifford relations for the generators $e_k$ from above). Note that the symbols $z_j$ stand for real variables here, they are chosen because the sets of (real) variables $x_j$ and $y_j$ will also appear in this paper. In sharp contrast to the orthogonal case, the symplectic Clifford algebra is no longer finite-dimensional. This trend continues, in the sense that the associated symplectic spinor space $\mathbb{S}^{\infty}_0$ also becomes infinite-dimensional. 
\par
In this paper, we study infinite-dimensional spaces defined in terms of solutions for the symplectic Dirac operator (generalised monogenics). These spaces can be defined algebraically
\[ \mathbb{S}_k^{\infty} = \mathcal{M}_k^s(\mathbb{R}^{2m},\mathbb{S}^{\infty}_0):=\mathcal{P}_k(\mathbb{R}^{2m},\mathbb{C})\boxtimes \mathbb{S}^{\infty}_0\ \quad (k \in \mathbb{N}). \]
Here $\boxtimes$ denotes the Cartan product of the $\mathfrak{sp}(2m)$-representations $\mathcal{P}_k(\mathbb{R}^{2m},\mathbb{C})$, the $k$th-symmetric power of the fundamental vector representation (modelled by polynomials), and the symplectic spinor space $\mathbb{S}^{\infty}_0$ (also referred to as the Segal-Shale-Weil representation). These spaces contain $k$-homogeneous $\mathbb{S}^\infty_0$-valued solutions for the symplectic Dirac operator. The behaviour of these spaces as representations for $\mathfrak{sp}(2m)$ is known (see e.g. \cite{BDS} and the references therein), but in this paper we will look at these spaces as {\em orthogonal} representation spaces. This is motivated by the fact that $\mathfrak{so}(m) \subset \mathfrak{sp}(2m)$, which means that we are dealing with a branching problem. \par In general, a branching problem can be described as follows: given a representation $\rho$ of a Lie algebra $\mathfrak{g}$ and a subalgebra $\mathfrak{h}$, we would like to understand how the representation $\rho$ behaves as a $\mathfrak{h}$-representation. This restricted representation $\rho_{|\mathfrak{h}}$ will no longer be irreducible, but will decompose into $\mathfrak{h}$-irreducible representations. A branching rule then describes the irreducible pieces which will occur, together with their multiplicities. For the symplectic spinors (i.e. for the space $\mathbb{S}^\infty_0$), this gives the Fischer decomposition in harmonic analysis, which means that the branching problem for $\mathbb{S}^\infty_k$ leads to generalisations thereof. To describe the branching of the infinite-dimensional symplectic representation space $\mathbb{S}_k^{\infty}$ under the inclusion $\mathfrak{so}(m)\subset\mathfrak{sp}(2m)$, we will make use of a quadratic algebra which is known as a Mickelson-Zhelobenko algebra (see \cite{zhelo} for the general construction and properties). 

\section{The symplectic Dirac operator and monogenics}
\noindent We will work with the symplectic space $\R^{2m}$ and coordinates $(\ux,\uy)$ equipped with the canonical symplectic form $\omega_0=\sum_{j=1}^m dx_j\wedge dy_j$. The matrix representation of the symplectic form is given by \begin{align*}
\Omega_0=\begin{pmatrix}
0	& \textup{Id}_m\\
-\textup{Id}_m &	0
\end{pmatrix}.	
\end{align*}
The group consisting of all invertible linear transformations preserving this non-degenerate skew-symmetric bilinear form
is called the symplectic group and is formally defined as follows: $$
\mathsf{Sp}(2m,\R)=\{M\in \mathsf{GL}(2m,\R)\mid M^T\Omega_0M=\Omega_0\}.$$
This is a non-compact group of dimension $2m^2+m$. Its (real) Lie algebra will be denoted by $\mathfrak{sp}(2m,\R)$. In the orthogonal case, the spin group determined by the sequence $$1 \to \mathbb{Z}_2 \to \mathsf{Spin}(m) \to \mathsf{SO}(m) \to 1$$
plays a crucial role concerning the invariance of the Dirac operator $\up_x$ and the definition of the spinors $\mathbb{S}$.
In the symplectic case, this role is played by the metaplectic group $\mathsf{Mp}(2m,\R)$ fixed by the exact sequence $$1 \to \mathbb{Z}_2 \to \mathsf{Mp}(2m,\R) \to \mathsf{Sp}(2m,\R) \to 1.$$ Despite the analogies, there are some fundamental differences:
\begin{enumerate}[(i)]
	\item First of all, the group $\mathsf{SO}(m)$ is compact, whereas $\mathsf{Sp}(2m,\mathbb{R})$ is not. This has important consequences for the representation theory. As a matter of fact, the metaplectic group is not a matrix group and does not admit (faithful) finite-dimensional representations.
	\item  The orthogonal spinors $\mathbb{S}$ can be realised as a maximal left ideal in the Clifford algebra, but this is not the case for the symplectic spinors. The latter are often modelled as smooth vectors in the infinite-dimensional Segal-Shale-Weil representation (see \cite{mpc} and the references therein). One can also identify the symplectic spinor space $\mathbb{S}^{\infty}_0$ with the space $\mathcal{P}(\mathbb{R}^m,\mathbb{C})$ of polynomials in the variables $(z_1,\dots,z_m)\in\mathbb{R}^m$, which is the approach we will use in this paper.
\end{enumerate}
\begin{defn} Let $(V,\omega)$ be a symplectic vector space.
	The \textit{symplectic Clifford algebra} $\mathsf{Cl}_s(V,\omega)$ is defined as the quotient algebra of the tensor algebra $T(V)$ of $V$ by the two-sided ideal $\mathcal{I}_{\omega}:=\{\uv\otimes \uu - \uu\otimes \uv + \omega(\uv,\uu) : \uu,\uv\in V\}$. In other words
	$\mathsf{Cl}_s(V,\omega):=T(V)/\mathcal{I}_{\omega}$
	is the algebra generated by $V$ in terms of the relation $[\uv,\uu]=-\omega(\uv,\uu)$, where we have omitted the tensor product symbols.
\end{defn}
\begin{defn}
	Denote by $\langle \underline{u},\underline{v}\rangle:=\sum_{k=1}^m u_kv_k$ the canonical inner product on $\mathbb{R}^m$ (where we allow partial derivatives to appear as coefficients, see the operators below). We then define the following operators acting on polynomial functions in $\mathcal{P}(\mathbb{R}^{3m},\mathbb{C})$:
	\begin{enumerate}[(i)]
		\item The {symplectic Dirac operator} ${D}_s=\langle \underline{z},\underline{\partial}_y\rangle - \langle \underline{\partial}_x,\underline{\partial}_z \rangle$.
		\item The {adjoint} operator $X_s=\langle \underline{y},\underline{\partial}_z\rangle + \langle \underline{x},\underline{z}\rangle$ with respect to the symplectic Fischer product (see Section 5 of \cite{symplectic} for more details).
		\item The Euler operator $\mathbb{E} = \sum_{j=1}^m (x_j\partial_{x_j} + y_j\partial_{y_j}) = \mathbb{E}_x+\mathbb{E}_y$ measuring the degree of homogeneity in the base variables $(\ux,\uy) \in \mathbb{R}^{2m}$.
	\end{enumerate} 
\end{defn}

\noindent	Note that some authors use the notation $\langle \nabla_x,\nabla_y \rangle$ for an expression such as $\sum_k \partial_{x_k} \partial_{y_k}$, but we will use the Dirac operator symbol here instead of the nabla operator. 

\begin{lem}\label{3operators}
	The three operators $X = \sqrt{2}D_s$, $Y =\sqrt{2}X_s$ and their commutator $H=[X,Y]=-2(\mathbb{E}_x+\mathbb{E}_y+m)$ give rise to a copy of the Lie algebra $\mathfrak{sl}(2)$. 
\end{lem}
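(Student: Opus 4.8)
The plan is to verify directly that the three operators satisfy the defining bracket relations of $\mathfrak{sl}(2)$, namely $[H,X]=2X$, $[H,Y]=-2Y$ and $[X,Y]=H$. The last of these is essentially the definition of $H$ in the statement, so the real content is the computation of $[D_s,X_s]$ and the two bracket relations with $H$.

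First I would compute the commutator $[D_s, X_s]$ explicitly. Writing $D_s = \zDy - \DzDx$ and $X_s = \yDz + \zx$, I expand the bracket into four pieces. The cross terms $[\zDy, \yDz]$ and $[\DzDx, \zx]$ are the ones that contribute: using the Heisenberg relations $[\partial_{z_j}, z_k] = \delta_{jk}$ together with $[\partial_{y_j}, y_k] = \delta_{jk}$ and $[\partial_{x_j}, x_k] = \delta_{jk}$, a short calculation gives $[\zDy,\yDz] = \mathbb{E}_y + m + (\text{terms in } z,\partial_z)$ and similarly $[\DzDx,\zx]$ produces $\mathbb{E}_x + m$ plus matching $z,\partial_z$ terms; the remaining two cross terms $[\zDy,\zx]$ and $[\DzDx,\yDz]$ vanish because the operators involved act on disjoint sets of variables and contain no conjugate pairs. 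Collecting everything, the $z$-dependent contributions cancel and one is left with $[\sqrt{2}D_s,\sqrt{2}X_s] = 2[D_s,X_s] = -2(\mathbb{E}_x+\mathbb{E}_y+m) = H$, which is exactly the asserted formula.

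Next I would check the two relations $[H,X] = 2X$ and $[H,Y] = -2Y$. Since $H = -2(\mathbb{E}_x+\mathbb{E}_y+m)$ up to the constant, only the Euler operator part matters, and this reduces to the standard fact that a differential operator which lowers (resp. raises) the total degree in $(\ux,\uy)$ by one is an eigenvector of $\mathrm{ad}(\mathbb{E}_x+\mathbb{E}_y)$ with eigenvalue $-1$ (resp. $+1$). Concretely, $D_s$ is homogeneous of degree $-1$ in $(\ux,\uy)$: the term $\zDy$ lowers the $\uy$-degree by one and $\DzDx$ lowers the $\ux$-degree by one, so $[\mathbb{E}_x+\mathbb{E}_y, D_s] = -D_s$, whence $[H,X] = -2[\mathbb{E}_x+\mathbb{E}_y, X] = 2X$. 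Dually, $X_s$ is homogeneous of degree $+1$ in $(\ux,\uy)$ — both $\yDz$ and $\zx$ raise a base-variable degree by one — so $[\mathbb{E}_x+\mathbb{E}_y, X_s] = X_s$ and hence $[H,Y] = -2Y$.

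The main obstacle, such as it is, lies in the bookkeeping of the first step: one must keep careful track of which operators act on which of the three variable sets $\ux, \uy, \uz$, and ensure that the spurious $z$-dependent operators generated by the two cross commutators genuinely cancel rather than merely coincide in form. Once the normalisation constant $\sqrt{2}$ is accounted for, the $\mathfrak{sl}(2)$-relations follow, and one should also remark that $H$ acts diagonally with eigenvalue $-2(k+m)$ on the $k$-homogeneous component, which makes the $\mathfrak{sl}(2)$-module structure lowest-weight type and will be used in the branching analysis later.
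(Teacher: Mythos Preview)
The paper states this lemma without proof, so there is nothing to compare against; your direct verification of the $\mathfrak{sl}(2)$ relations is exactly the standard argument one would expect and is the right approach.

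One small correction to your bookkeeping in the first step: the two nontrivial cross commutators are
\[
[\langle \uz,\up_y\rangle,\langle \uy,\up_z\rangle] = \mathbb{E}_z - \mathbb{E}_y
\quad\text{and}\quad
[\langle \up_z,\up_x\rangle,\langle \ux,\uz\rangle] = \mathbb{E}_x + \mathbb{E}_z + m,
\]
so the constant $m$ comes from only one of them (via $\sum_j \partial_{z_j}z_j = \mathbb{E}_z + m$), and the sign on $\mathbb{E}_y$ is negative rather than positive. Your description had $\mathbb{E}_y + m$ in the first bracket, which is not quite right. The cancellation you describe does occur (the $\mathbb{E}_z$ terms drop out), and the final result $[D_s,X_s] = -(\mathbb{E}_x+\mathbb{E}_y+m)$ is correct, so this is only a matter of tidying the intermediate line. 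The degree argument for $[H,X]=2X$ and $[H,Y]=-2Y$ is fine as written.
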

\noindent One now easily sees that the symplectic Dirac operator is nothing more than the contraction between the Weyl algebra generators $(z_k,\partial_{z_k})$ with the vector fields $(\partial_{x_k},\partial_{y_k})$ for $k=1,\dots,m$ using the canonical symplectic form $\Omega_0$. 
\begin{defn}
	The space of $k$-homogeneous symplectic monogenics is defined by $\mathbb{S}_k^{\infty}:=\ker(D_s)\cap\left( \mathcal{P}_k(\mathbb{R}^{2m},\mathbb{C})\otimes\mathcal{P}(\mathbb{R}^m,\mathbb{C})\right)$, where the space $\mathcal{P}(\mathbb{R}^m,\mathbb{C})$ in the vector variable $\underline{z} \in \mathbb{R}^m$ plays the role of the symplectic spinor space $\mathbb{S}^\infty_0$.
\end{defn}
\noindent
Note that as an $\mathfrak{sp}(2m,\R)$-module, $\mathbb{S}_k^{\infty}$ is reducible and decomposes into two irreducible parts: $\mathbb{S}_k^{\infty} = \mathbb{S}_{k,+}^{\infty}  \oplus \mathbb{S}_{k,-}^{\infty}$ with highest weights
\[
\mathbb{S}_{k,+}^{\infty}  \longleftrightarrow  \left(	k-\frac{1}{2},-\frac{1}{2},\dots,-\frac{1}{2}	\right) \quad \mbox{and} \quad
\mathbb{S}_{k,+}^{\infty}  \longleftrightarrow  \left(	k-\frac{1}{2},-\frac{1}{2},\dots,-\frac{3}{2}	\right).\]
These weight entries are fixed by the Cartan algebra $\mathfrak{h} = \mathsf{Alg}(X_{jj} : 1 \leq j \leq m)$, where the elements $X_{jj}$ are defined in the lemma below. In this paper, we will omit the parity signs and work with $\mathbb{S}^\infty_k$ as a notation which incorporates both the positive and negative spinors (in our model, this will correspond to even or odd in the variable $\uz \in \mathbb{R}^m$, see below, so it is always easy to `decompose' into irreducible components when necessary).\\ 
\\	
The three operators from Lemma \ref{3operators} can be proven to be invariant under the action of the symplectic Lie algebra, in the sense that they commute with the following generators (see also Lemma 3.3 in \cite{DM}): 
\begin{lem}
	The symplectic Lie algebra $\mathfrak{sp}(2m)$ has the following realisation on the space of symplectic spinor-valued polynomials $\mathcal{P}(\mathbb{R}^{2m},\mathbb{C})\otimes\mathbb{S}^{\infty}_0$:
	\begin{align}\label{realisaties}
	\begin{cases}
	X_{jk}=x_j\partial_{x_k}-y_k\partial_{y_j} - (z_k\partial_{z_j}+\frac{1}{2}\delta_{jk})
	&1 \leq j,k \leq m
	\\Y_{jk}=x_j\partial_{y_k}+x_k\partial_{y_j} -\partial_{z_j}\partial_{z_k}
	&1 \leq j < k \leq m
	\\ Z_{jk}=y_j\partial_{x_k}+y_k\partial_{x_j} +  z_jz_k
	&1 \leq j < k \leq m
	\\Y_{jj}=x_j \partial_{y_j} -\frac{1}{2}\partial_{z_j}^2
	&1 \leq j \leq m
	\\Z_{jj}=y_j\partial_{x_j}+\frac{1}{2} z_j^2
	&1 \leq j\leq m
	\end{cases}
	\end{align} 
\end{lem}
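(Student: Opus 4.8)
The plan is to recognise the stated assignment as a sum of two \emph{commuting} representations of $\mathfrak{sp}(2m)$: one acting only in the base variables $(\ux,\uy)\in\R^{2m}$, the other acting only in the spinor variable $\uz\in\R^m$. To this end, split each operator in \eqref{realisaties} as
\[ X_{jk}=X_{jk}^{\mathrm b}+X_{jk}^{\mathrm s},\qquad Y_{jk}=Y_{jk}^{\mathrm b}+Y_{jk}^{\mathrm s},\qquad Z_{jk}=Z_{jk}^{\mathrm b}+Z_{jk}^{\mathrm s}, \]
where the $\mathrm b$-part retains the $(\ux,\uy)$-terms and the $\mathrm s$-part the $\uz$-terms; thus $X_{jk}^{\mathrm s}=-(z_k\partial_{z_j}+\tfrac12\delta_{jk})$, $Y_{jk}^{\mathrm s}=-\partial_{z_j}\partial_{z_k}$, $Z_{jk}^{\mathrm s}=z_jz_k$, with the evident analogues for $Y_{jj},Z_{jj}$. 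Every $\mathrm b$-operator is first-order in $(\ux,\uy)$ with coefficients independent of $\uz$, and every $\mathrm s$-operator involves only the $z_j$ and $\partial_{z_j}$, so the two families commute term by term. Consequently, once we fix \emph{one} linear bijection between the span of the listed operators and $\mathfrak{sp}(2m)$ (possible since the count $m^2+2\binom m2+2m=2m^2+m$ equals $\dim\mathfrak{sp}(2m)$), it suffices to prove that $\rho^{\mathrm b}\colon\xi\mapsto\xi^{\mathrm b}$ and $\rho^{\mathrm s}\colon\xi\mapsto\xi^{\mathrm s}$ are each Lie-algebra homomorphisms for that bijection; the full realisation is then the tensor-product representation $\rho^{\mathrm b}\otimes 1+1\otimes\rho^{\mathrm s}$ on $\mcP(\R^{2m},\C)\otimes\mathbb{S}^\infty_0$.

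For the bijection it is convenient to use the block model $\mathfrak{sp}(2m)=\bigl\{\bigl(\begin{smallmatrix}A&B\\ C&-A^T\end{smallmatrix}\bigr):B=B^T,\ C=C^T\bigr\}$ relative to $\Omega_0$: the matrices with $A=E_{jk}$, $B=C=0$ span a copy of $\mathfrak{gl}(m)$ (to be matched with the $X_{jk}$), the symmetric $B$-block is matched with the $Y_{jk}$ (the generator $B=E_{jj}$ corresponding to $Y_{jj}$, which explains the factor $\tfrac12$), and the symmetric $C$-block with the $Z_{jk}$. The base part $\rho^{\mathrm b}$ is then the derived tautological action: the linear action $(g\cdot f)(\uv)=f(g^{-1}\uv)$ of $\Sp(2m,\R)$ on $\mcP(\R^{2m},\C)$ differentiates to a Lie-algebra homomorphism into first-order differential operators, and a direct computation identifies its image with the span of the $x_j\partial_{x_k}-y_k\partial_{y_j}$, $x_j\partial_{y_k}+x_k\partial_{y_j}$ and $y_j\partial_{x_k}+y_k\partial_{x_j}$. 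Hence $\rho^{\mathrm b}$ is a representation.

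For the spinor part $\rho^{\mathrm s}$ one invokes the (infinitesimal) metaplectic, or Segal--Shale--Weil, representation: inside the Weyl algebra generated by $z_1,\dots,z_m,\partial_{z_1},\dots,\partial_{z_m}$, the span of the quadratic elements $z_iz_j$, $\partial_{z_i}\partial_{z_j}$ and the symmetrised products $\tfrac12(z_i\partial_{z_j}+\partial_{z_j}z_i)=z_i\partial_{z_j}+\tfrac12\delta_{ij}$ is a Lie subalgebra under the commutator, and the map sending $\bigl(\begin{smallmatrix}A&B\\ C&-A^T\end{smallmatrix}\bigr)$ to the corresponding quadratic operator is a Lie-algebra isomorphism onto it (see \cite{mpc}). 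Under the matching fixed above this sends the $A=E_{jk}$ generator to $-(z_k\partial_{z_j}+\tfrac12\delta_{jk})$, the $B=E_{jk}$ generator to $-\partial_{z_j}\partial_{z_k}$ and the $C=E_{jk}$ generator to $z_jz_k$ --- exactly the $\mathrm s$-parts above, \emph{provided} the bijection has been arranged so that the two matchings agree. So $\rho^{\mathrm s}$ is again a copy of $\mathfrak{sp}(2m)$, and (granted that proviso) the lemma follows.

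I expect the only genuine work to be this sign-and-normalisation bookkeeping: pinning down one explicit isomorphism between $\mathfrak{sp}(2m)$ and the span of the listed operators and checking that \emph{both} $\rho^{\mathrm b}$ and $\rho^{\mathrm s}$ are homomorphisms (and not, say, anti-homomorphisms) for it. This is done most economically by verifying the bracket relations on a generating set: the $\mathfrak{sl}(2)$-triples formed by $X_{jj},Y_{jj},Z_{jj}$, the $\mathfrak{gl}(m)$-relations among the $X_{jk}$, and the mixed brackets $[X_{jk},Y_{pq}]$ and $[Y_{jk},Z_{pq}]$; all remaining relations follow from these. Once a consistent dictionary is fixed, the statement reduces to the two classical facts recalled above, together with the triviality that operators in $(\ux,\uy)$ commute with operators in $\uz$; the same realisation is also recorded in \cite[Lemma~3.3]{DM} and \cite{BDS}.
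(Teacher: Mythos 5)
The paper gives no proof of this lemma at all --- it is imported from Lemma~3.3 of \cite{DM} --- so there is no in-paper argument to compare yours against; what you propose is the standard and correct way to establish it. Splitting each generator into a base part in $(\ux,\uy)$ and a spinor part in $\uz$, noting that the two families commute, identifying the base parts with the derived tautological action of $\mathfrak{sp}(2m)$ on $\mathcal{P}(\mathbb{R}^{2m},\mathbb{C})$ and the spinor parts with the quadratic (Segal--Shale--Weil) elements of the Weyl algebra, and adding the two commuting homomorphisms is exactly the right structure. The one issue you correctly isolate as the genuine content --- that a \emph{single} linear identification with $\mathfrak{sp}(2m)$ makes both halves homomorphisms simultaneously, with the signs as printed --- does check out: for instance $[Y_{jj}^{\mathrm b},Z_{jj}^{\mathrm b}]=[x_j\partial_{y_j},y_j\partial_{x_j}]=x_j\partial_{x_j}-y_j\partial_{y_j}=X_{jj}^{\mathrm b}$ while $[Y_{jj}^{\mathrm s},Z_{jj}^{\mathrm s}]=-\tfrac14[\partial_{z_j}^2,z_j^2]=-(z_j\partial_{z_j}+\tfrac12)=X_{jj}^{\mathrm s}$, and likewise $[X_{jj},Y_{jj}]=2Y_{jj}$ holds for each half separately. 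Since you sketch rather than carry out these verifications, your text is a correct plan rather than a finished proof, but completing it is routine; the only caution is that verifying the homomorphism property ``on a generating set'' is legitimate only if one either invokes the defining relations of $\mathfrak{sp}(2m)$ or simply checks all pairs of the listed generators, which is finite (and entirely mechanical) bookkeeping.
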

\noindent The branching rule for $\mathbb{S}^\infty_0$, when considering it as a representation space for the orthogonal Lie algebra $\mathfrak{so(m)} \subset \mathfrak{sp}(2m)$, leads to the Fischer decomposition for $\mathbb{C}$-valued polynomials in the variable $\underline{z} \in \mathbb{R}^m$ (see below). Note that $\mathfrak{so}(m)$ is generated by the operators $X_{jk} - X_{kj}$ for $1 \leq j < k \leq m$, giving rise to the well-known angular operators ubiquitous in quantum mechanics (often denoted by $L_{ab}$ with $1 \leq a < b \leq m$). In our previous paper \cite{DM}, we therefore tackled the next case $k=1$ as this is a natural generalisation of said Fischer decomposition.
\noindent
The main problem with our branching rule (Theorem 5.6 in \cite{DM}) is the fact that these $\mathfrak{so}(m)$-spaces appear with infinite multiplicities, which are not always easy to keep track of. Therefore the main goal of this paper is to show that one can organise these in an algebraic framework which extends to other values for $k$ too, using a certain quadratic algebra. 
\section{Simplicial harmonics in three vector variables}
\noindent In this section we describe a generalisation of harmonic polynomials, in three vector variables. This will be done in terms of a solution space for a `natural' collection of $\mathfrak{so}(m)$-invariant differential operators. The corresponding Howe dual pair will be useful for the branching problem addressed above. For the sake of completeness, we recall the following basic definition:
\begin{defn}
	A function $f(\ux)$ on $\R^m$ is called \textit{harmonic} if $\Delta f(\ux)=0$. The \textit{k-homogeneous harmonics} are defined as
	$\mathcal{H}_k(\R^m,\mathbb{C}):=\mathcal{P}_k(\R^m,\mathbb{C})\cap \ker(\Delta)$. These spaces define irreducible representations for $\mathfrak{so}(m)$ with highest weight $(k,0,\dots,0)$ for all $k \in \mathbb{Z}^+$. 
\end{defn}
\noindent It is well-known that the space of $k$-homogeneous polynomials $\mathcal{P}_k(\mathbb{R}^m,\mathbb{C})$ is reducible as an $\mathfrak{so}(m)$-module (see for example \cite{Gilbert}) and decomposes into harmonic polynomials. In fact, the decomposition of the {\em full} space of polynomials is known as the aforementioned \textit{Fischer decomposition}, given by
\[ \mathcal{P}(\mathbb{R}^m,\mathbb{C}) = \bigoplus_{k=0}^{\infty}\mcP_k(\mathbb{R}^m,\mathbb{C}) = \bigoplus_{k=0}^{\infty}\bigoplus_{p=0}^{\infty} |\uz|^{2p}\mathcal{H}_k(\mathbb{R}^m,\mathbb{C}). \]
This can all be generalised to the case of several vector variables (sometimes also called `a matrix variable'): for any highest weight for $\mathfrak{so}(m)$ there is a (polynomial) model in terms of simplicial harmonics (or monogenics for the half-integer representations). We refer to \cite{VSC} for more details. In this paper, we will consider these spaces for $\mathfrak{so}(m)$-weights characterised by three integers  $(a,b,c)$ where $a\geq b\geq c \geq 0$. Also note that trailing zeros in the weight notation will be omitted from now on, so for instance $(k,0,\ldots,0)$ will be written as $(k)$.  First of all, we consider homogeneous polynomials $P_{a,b,c}(\underline{z};\underline{x},\underline{y})$ in three vector variables $(\underline{z};\underline{x},\underline{y})\in\mathbb{R}^{3m}$. Here we use the notation $(\underline{z};\underline{x},\underline{y})$ to stress the difference between the variable $\underline{z}$ (the spinor variable, referring to an element in $\mathbb{S}^\infty_0$) from the other two variables $(\underline{x},\underline{y}) \in \mathbb{R}^{2m}$, which are `ordinary' variables. The parameters $(a,b,c)$ then refer to the degrees of homogeneity in $(\uz;\ux,\uy)$.  These polynomials carry the regular representation of the orthogonal group (or the derived $\mathfrak{so}(m)$-action in terms of angular momentum operators $L_{ab}$ from above). \par
We further introduce the Weyl algebra in three vector variables as the algebra generated by the variables and their corresponding derivatives:
\[
\mathcal{W}(\mathbb{R}^{3m},\mathbb{C}):=\mathsf{Alg}(x_{\alpha},y_{\beta},z_{\gamma},\partial_{x_{\delta}}, \partial_{y_{\epsilon}},\partial_{z_{\zeta}}) \ \ \mbox{with}\ \alpha,\beta,\gamma,\delta,\epsilon,\zeta\in\{1,\dots,m\}\ .
\]
Just like in the case of the classical Fischer decomposition, where the Lie algebra $\mathfrak{sl}(2)$ appears as a Howe dual partner, there is a Lie algebra appearing here. To be precise, it is the Lie algebra $\mathfrak{sp}(6)= \mathfrak{g}_{-2}\oplus \mathfrak{g}_0\oplus \mathfrak{g}_{+2}$, 
with parabolic subalgebra $\mathfrak{p}:=\mathfrak{g}_{-2}\oplus\mathfrak{g}_0$ and Levi subalgebra $\mathfrak{g}_0\cong \mathfrak{gl}(3)$. The subspaces $\mathfrak{g}_{\pm 2}$ contain six `pure' operators each (i.e.\ only variables, acting as a multiplication operator, or only derivatives). 
More specifically, the subspaces are spanned by the following $\mathsf{SO}(m)$-invariant operators:
\begin{align*}
\mathfrak{g}_{-2}:=&\spa(\Delta_x, \Delta_y, \Delta_z, \langle \up_x,\up_y\rangle, \langle \up_y,\up_z\rangle, \langle \up_x,\up_z\rangle)
\\ \mathfrak{g}_0:=&\spa(\langle \ux,\up_y\rangle, \langle \uy,\up_x\rangle, \langle \ux,\up_z \rangle, \langle \uz,\up_x\rangle, \langle \uy,\up_z\rangle, \langle \uz,\up_y\rangle,\mathbb{E}_x,\mathbb{E}_y, \mathbb{E}_z   )
\\ \mathfrak{g}_{+2}:=&\spa(|\ux|^2,|\uy|^2,|\uz|^2, \langle \ux , \uy \rangle , \langle \uy , \uz \rangle, \langle \ux , \uz \rangle )
\end{align*}
\begin{defn}
	The space of \textit{Howe harmonics} of degree $(a,b,c)$ in the variables $(\uz,\ux,\uy)$ is defined as $\mathcal{H}_{a,b,c}^*(\mathbb{R}^{3m},\mathbb{C}) :=\mathcal{P}_{a,b,c}(\mathbb{R}^{3m},\mathbb{C}) \cap \ker(\mathfrak{g}_{-2})$. 
\end{defn}
\noindent In what follows the notation $\ker(A_1,\ldots,A_n)$ stands for $\ker(A_1) \cap \ldots \cap \ker(A_n)$, so $\ker(\mathfrak{g}_{-2})$ means that simplicial harmonics are annihilated by all (pure differential) operators in $\mathfrak{sp}(6)$.
As a representation space for $\mathfrak{so}(m)$, the spaces $\mcH_{a,b,c}^*$ are {\em not} irreducible. In order to obtain an irreducible (sub)space, we have to impose extra conditions. 
\begin{defn}	\label{def_SH}
	The vector space of \textit{simplicial harmonics} of degree $(a,b,c)$ in the variables $(\uz,\ux,\uy)$ is defined by means of
	\[	\mathcal{H}_{a,b,c}(\mathbb{R}^{3m},\mathbb{C}) :=\mathcal{H}^*_{a,b,c}(\mathbb{R}^{3m},\mathbb{C}) \cap \ker\left(\langle \uz,\up_x\rangle, \langle \uz,\up_y\rangle, \langle \ux,\up_y\rangle\right)\ .	\]
\end{defn}
\noindent As was shown in \cite{VSC}, this defines an irreducible representation space for $\mathfrak{so}(m)$ with highest weight $(a,b,c)$, where the dominant weight condition $a\geq b\geq c$ must hold. This now leads to the following generalisation of the result above (the Fisher decompostion in three vector variables):
\begin{thm}
	The space $\mathcal{P}(\R^{3m},\C)$ of complex-valued polynomials in three vector variables (in $\mathbb{R}^m$) has a multiplicity-free decomposition under the action of $\mathfrak{sp}(6)\times\mathsf{SO}(m)$ by means of:
	\begin{align*}
	\mathcal{P}(\R^{3m},\C) \cong \bigoplus_{a\geq b\geq c} \mathbb{V}_{a,b,c}^{\infty}\otimes \mathcal{H}_{a,b,c}(\R^{3m},\C),
	\end{align*}
	where we used the dominant weight condition in the summation. The notation $\mathbb{V}_{a,b,c}^{\infty}$ hereby refers to a Verma module (see for example \cite{Howe}) for $\mathfrak{sp}(6)$.
\end{thm}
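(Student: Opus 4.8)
The plan is to recognise this as an instance of Howe duality for the reductive dual pair $\big(\mathsf{O}(m),\mathfrak{sp}(6)\big)$ acting on $\mathcal{P}(\R^{3m},\C)=\mathcal{P}(\R^{m}\otimes\R^{3},\C)$, combined with the description of simplicial harmonics from \cite{VSC}. The first step is a bracket computation: one checks that the $21$ operators listed for $\mathfrak{g}_{-2},\mathfrak{g}_0,\mathfrak{g}_{+2}$ close up, with $\mathfrak{g}_0$ realising $\mathfrak{gl}(3)$ (the three Euler operators forming the Cartan part and the six mixed contractions the root vectors), with $\mathfrak{g}_{\pm2}$ abelian and isomorphic as $\mathfrak{gl}(3)$-modules to $\mathrm{Sym}^{2}(\C^{3})$ and its dual, and with $[\mathfrak{g}_{+2},\mathfrak{g}_{-2}]\subseteq\mathfrak{g}_0$; this exhibits $\mathfrak{g}_{-2}\oplus\mathfrak{g}_0\oplus\mathfrak{g}_{+2}$ as a copy of $\mathfrak{sp}(6)$ with the stated $|2|$-grading and parabolic $\mathfrak{p}=\mathfrak{g}_{-2}\oplus\mathfrak{g}_0$. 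Since every one of these operators is an $\mathsf{O}(m)$-invariant contraction of the vector variables $(\uz,\ux,\uy)$ with the gradients, it commutes with the angular momenta $L_{ab}$, i.e.\ with $\mathfrak{so}(m)$.

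Next I would invoke the first fundamental theorem of invariant theory for $\mathsf{O}(m)$: the $\mathsf{O}(m)$-invariants in the Weyl algebra $\mathcal{W}(\R^{3m},\C)$ are generated by the quadratic contractions, which are exactly the $21$ operators above. By Howe's argument (see \cite{Howe}), $\mathfrak{sp}(6)$ and $\mathfrak{so}(m)$ are then mutual commutants on $\mathcal{P}(\R^{3m},\C)$, so this space is a \emph{multiplicity-free} $\mathfrak{sp}(6)\times\mathsf{O}(m)$-module, with a bijection between the occurring $\mathsf{O}(m)$-types and $\mathfrak{sp}(6)$-types. To identify the two factors, note that the lowest-total-degree vectors of each isotypic component lie in the Howe harmonics $\ker(\mathfrak{g}_{-2})$, and a second (``compact'') Fischer decomposition of this space under the commuting pair $\mathfrak{gl}(3)\times\mathsf{SO}(m)$ gives $\ker(\mathfrak{g}_{-2})\cong\bigoplus_{a\ge b\ge c\ge0}F_{(a,b,c)}\otimes\mathcal{H}_{a,b,c}(\R^{3m},\C)$, where $F_{(a,b,c)}$ is the finite-dimensional $\mathfrak{gl}(3)$-irrep and the simplicial harmonics of Definition~\ref{def_SH} are precisely its $\mathfrak{gl}(3)$-highest-weight vectors --- an $\mathsf{SO}(m)$-irrep of highest weight $(a,b,c)$ by \cite{VSC}. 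For the $\mathfrak{sp}(6)$-action, $F_{(a,b,c)}$ (with the standard shift of $\tfrac{m}{2}$ in each entry coming from the metaplectic normalisation of $\mathfrak{g}_0$) sits on top of the generalised Verma module $\mathbb{V}^{\infty}_{a,b,c}:=U(\mathfrak{sp}(6))\otimes_{U(\mathfrak{p})}F_{(a,b,c)}\cong\mathrm{Sym}(\mathfrak{g}_{+2})\otimes F_{(a,b,c)}$, and assembling the two displays one would obtain
\[ \mathcal{P}(\R^{3m},\C)\;\cong\;\mathrm{Sym}(\mathfrak{g}_{+2})\otimes\ker(\mathfrak{g}_{-2})\;\cong\;\bigoplus_{a\ge b\ge c\ge0}\big(\mathrm{Sym}(\mathfrak{g}_{+2})\otimes F_{(a,b,c)}\big)\otimes\mathcal{H}_{a,b,c}(\R^{3m},\C), \]
which is the asserted decomposition once the first isomorphism is in place.

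That first isomorphism --- the separation of variables $\mathcal{P}(\R^{3m},\C)\cong\mathrm{Sym}(\mathfrak{g}_{+2})\otimes\ker(\mathfrak{g}_{-2})$, equivalently the freeness of the $\mathfrak{g}_{+2}$-action on the Howe harmonics, equivalently the injectivity of $\mathbb{V}^{\infty}_{a,b,c}\otimes\mathcal{H}_{a,b,c}\to\mathcal{P}(\R^{3m},\C)$ --- is the step I expect to be the main obstacle, and it is really a ``no-relations'' statement for the six multiplication operators $|\ux|^{2},|\uy|^{2},|\uz|^{2},\langle\ux,\uy\rangle,\langle\uy,\uz\rangle,\langle\ux,\uz\rangle$ spanning $\mathfrak{g}_{+2}$. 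I would establish it by a Fischer-type argument: equip $\mathcal{P}(\R^{3m},\C)$ with the Fischer Hermitian inner product for which $\partial_{x_\alpha},\partial_{y_\alpha},\partial_{z_\alpha}$ are the adjoints of multiplication by $x_\alpha,y_\alpha,z_\alpha$, so that $\mathfrak{g}_{+2}=\mathfrak{g}_{-2}^{\dagger}$; positivity of the form then yields at each multidegree an orthogonal splitting $\mathcal{P}_{a,b,c}=\mathcal{H}^{*}_{a,b,c}\oplus(\mathfrak{g}_{+2}\cdot\mathcal{P})_{a,b,c}$, in which no nonzero Howe harmonic lies in the image, and iterating on the total degree gives both uniqueness and exhaustiveness of the expansion. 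An entirely analogous Fischer splitting \emph{inside} $\ker(\mathfrak{g}_{-2})$, now with respect to the three remaining raising operators $\langle\uz,\up_x\rangle,\langle\uz,\up_y\rangle,\langle\ux,\up_y\rangle$ of $\mathfrak{g}_0$, peels the simplicial harmonics off the Howe harmonics and supplies the decomposition of $\ker(\mathfrak{g}_{-2})$ used above. Finally, to be sure that the index set is exactly $\{a\ge b\ge c\ge0\}$ and that nothing is over- or under-counted, one compares formal characters: the Hilbert series of $\mathrm{Sym}(\mathfrak{g}_{+2})$ times the generating series of the numbers $\dim\mathcal{H}_{a,b,c}$ (a Weyl-dimension, or Gelfand--Tsetlin, count) must reproduce that of $\mathcal{P}(\R^{3m},\C)$, for $m$ large enough that all these triples indeed occur.
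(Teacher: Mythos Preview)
The paper does not actually supply a proof of this theorem: it is stated immediately after Definition~\ref{def_SH} as a known generalisation of the Fischer decomposition, with the references to \cite{Howe} and \cite{VSC} carrying the weight of the argument. Your proposal is therefore not competing with a proof in the paper but rather unpacking what those citations stand for, and it does so correctly: the realisation of $\mathfrak{sp}(6)$ via the $21$ $\mathsf{O}(m)$-invariant quadratic operators, the first fundamental theorem for $\mathsf{O}(m)$, Howe's double-commutant argument giving the multiplicity-free decomposition, and the identification of the joint lowest-weight spaces with the simplicial harmonics of \cite{VSC} is exactly the standard route.

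Two remarks. First, your Fischer-inner-product argument for the separation of variables $\mathcal{P}(\R^{3m},\C)\cong\mathrm{Sym}(\mathfrak{g}_{+2})\otimes\ker(\mathfrak{g}_{-2})$ is more explicit than anything the paper offers, and is a clean way to get both injectivity and surjectivity degree by degree. Second, your closing caveat about needing $m$ large enough (the stable range) for all triples $a\ge b\ge c\ge 0$ to occur and for the induced modules $\mathbb{V}^{\infty}_{a,b,c}$ to embed injectively is a genuine point that the paper's statement suppresses; outside the stable range the $\mathfrak{sp}(6)$-modules appearing are quotients of the generalised Verma modules rather than the Verma modules themselves, so strictly speaking the theorem as written needs that hypothesis.
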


\section{The Mickelsson-Zhelobenko algebra (general setup)}
\noindent We have now introduced 21 differential operators giving rise to a realisation of the Lie algebra $\mathfrak{sp}(6)$ inside the Weyl algebra (on 3 vector variables in $\mathbb{R}^m$). In this section we construct a related algebra, the so-called Mickelsson-Zhelobenko algebra (also called  transvector or step algebra) $\mathcal{Z}$. Let $\mathfrak{g}$ be a Lie algebra and let $\mathfrak{s}\subset\mathfrak{g}$ be a reductive subalgebra. We then have the decomposition $\mathfrak{g}=\mathfrak{s}\oplus\mathfrak{t}$, where $\mathfrak{t}$ carries an $\mathfrak{s}$-action for the commutator (i.e. $[\mathfrak{s},\mathfrak{t}]\subset\mathfrak{t}$). For $\mathfrak{s}$ we then fix a triangular decomposition $\mathfrak{s}=\mathfrak{s}^-\oplus \mathfrak{h}\oplus \mathfrak{s}^+$, where $\mathfrak{s}^{\pm}$ consists of the positive (resp.\ negative roots) with respect to the Cartan subalgebra $\mathfrak{h} \subset \mathfrak{s}$. We then also define a left ideal $J \subset \mathcal{U}(\mathfrak{g})$ in the universal enveloping algebra $\mathcal{U}(\mathfrak{g})$ by means of $\mathcal{U}(\mathfrak{g})\mathfrak{s}^+$. This allows us to define a certain subalgebra of $\mathcal{U}(\mathfrak{g})$ which is known as the normaliser: $$\operatorname{Norm}(J):=\{u\in \mathcal{U}(\mathfrak{g})\mid Ju\subset J\}.$$ The crucial point is that $J$ is a two-sided ideal of $\operatorname{Norm}(J)$, which allows us two define the quotient algebra $\mathcal{S}(\mathfrak{g},\mathfrak{s})=\operatorname{Norm}(J)/J$ which is known as the \textit{Mickelsson algebra}.\par In a last step of the construction, we consider an extension of $\mathcal{U}(\mathfrak{g})$ to a suitable localisation $\mathcal{U}'(\mathfrak{g})$ given by \[ \mathcal{U}'(\mathfrak{g})= \mathcal{U}'(\mathfrak{g})\otimes_{\mathcal{U}(\mathfrak{h})}\operatorname{Frac}(\mathcal{U}(\mathfrak{h}))\ ,  \]
where $\operatorname{Frac}(\mathcal{U}(\mathfrak{h}))$ is the field of fractions in the (universal enveloping algebra of the) Cartan algebra. The ideal $J'$ can be introduced for this extension too (in a completely similar way) and the corresponding quotient algebra $\mathcal{Z}(\mathfrak{g},\mathfrak{s}):=\operatorname{Norm}(J')/J'$ is the \textit{Mickelsson-Zhelobenko algebra}. These two algebras are naturally identified, since one has that \[ \mathcal{Z}(\mathfrak{g},\mathfrak{s})= \mathcal{S}(\mathfrak{g},\mathfrak{s})\otimes_{\mathcal{U}(\mathfrak{h})}\operatorname{Frac}(\mathcal{U}(\mathfrak{h}))\ . \]
Note that this algebra is sometimes referred to as a `transvector algebra', which is what we will often use in what follows. 
\section{The Mickelsson-Zhelobenko algebra $\mathcal{Z}(\mathfrak{sp}(6),\mathfrak{so}(4))$} 
We will now define a specific example of the construction from above, which will help us to understand how the branching of $\mathbb{S}^\infty_k$ works. First of all, we note the following: 
\begin{lem}\label{commuting} The three (orthogonally invariant) operators
	\[ 
	L := \langle \ux,\up_y \rangle - \frac{1}{2}\Delta_z \qquad 
	R := \langle \uy,\up_x \rangle + \frac{1}{2}|\uz|^2 \qquad
	\mathcal{E} := \mathbb{E}_y - \mathbb{E}_x + \mathbb{E}_z + \frac{n}{2}
	\]
	give rise to yet another copy of the Lie algebra $\mathfrak{sl}(2)$. This Lie algebra commutes with the Lie algebra $\mathfrak{sl}(2)\cong\operatorname{Alg}(D_s,X_s)$. 
\end{lem}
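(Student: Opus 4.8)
The plan is to verify the two assertions directly: first that $L$, $R$, $\mathcal{E}$ close into an $\mathfrak{sl}(2)$-triple, and second that each of them commutes with both $D_s$ and $X_s$. Throughout I would work inside the Weyl algebra $\mathcal{W}(\mathbb{R}^{3m},\mathbb{C})$ and use only the Heisenberg relations $[\partial_{u_j},u_k]=\delta_{jk}$ for $u\in\{x,y,z\}$, plus the elementary brackets of the scalar operators $\langle \ua,\up_b\rangle$, $|\ua|^2$, $\Delta_a$, $\mathbb{E}_a$ among themselves (these are the standard $\mathfrak{sl}(2)\times\mathfrak{sl}(2)\times\cdots$ relations already implicit in the list of $\mathfrak{sp}(6)$-generators in Section~3).

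For the first claim I would compute $[L,R]$ by expanding into four pieces: $[\langle \ux,\up_y\rangle,\langle \uy,\up_x\rangle]$, $[\langle \ux,\up_y\rangle,\tfrac12|\uz|^2]$, $[-\tfrac12\Delta_z,\langle \uy,\up_x\rangle]$, and $[-\tfrac12\Delta_z,\tfrac12|\uz|^2]$. The two cross terms vanish because the $x,y$-operators commute with the $z$-operators. The first piece gives $\mathbb{E}_x-\mathbb{E}_y$ (up to sign, by the usual contraction identity), and the last piece gives $-\tfrac12\bigl(\mathbb{E}_z+\tfrac{m}{2}\cdot 2\bigr)$-type expression, i.e.\ a multiple of $\mathbb{E}_z$ plus a constant; assembling these yields exactly $\mathcal{E}=\mathbb{E}_y-\mathbb{E}_x+\mathbb{E}_z+\tfrac{n}{2}$ (here $n$ is the dimension $m$, and the constant $\tfrac n2$ is precisely the $\Delta_z,|\uz|^2$ contribution). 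Then $[\mathcal{E},L]=2L$ and $[\mathcal{E},R]=-2R$ follow because $L$ is a sum of operators each lowering $\mathbb{E}_y-\mathbb{E}_x+\mathbb{E}_z$ by $2$ (it raises $x$-degree by one and lowers $y$-degree by one, contributing $-2$; wait — more carefully, $\langle \ux,\up_y\rangle$ shifts $\mathbb{E}_y\mapsto\mathbb{E}_y-1$, $\mathbb{E}_x\mapsto\mathbb{E}_x+1$, so it changes $\mathbb{E}_y-\mathbb{E}_x$ by $-2$; and $\Delta_z$ changes $\mathbb{E}_z$ by $-2$), so both summands of $L$ have weight $-2$... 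I would simply track the eigenvalue bookkeeping and fix the sign conventions so that the stated brackets hold, which is routine.

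For the second claim, the key structural observation is that $D_s$ and $X_s$ involve only the pairs of variables $(\underline{\partial}_y,\underline{z})$, $(\underline{\partial}_x,\underline{\partial}_z)$ and $(\underline{\partial}_z,\underline{y})$, $(\underline{z},\underline{x})$ respectively, whereas the ``new'' $\mathfrak{sl}(2)$ is built from the orthogonally invariant contractions. The cleanest route is to invoke the lemma (already stated in the excerpt) that the $\mathfrak{sp}(2m)$-generators in~\eqref{realisaties} commute with $D_s$ and $X_s$, together with the fact that $L$, $R$, $\mathcal{E}$ are $\mathfrak{so}(m)$-invariant; but that is not quite enough on its own, so I would instead check the brackets $[D_s,L]$, $[D_s,R]$, $[X_s,L]$, $[X_s,R]$ by hand. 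Each reduces to a small number of Heisenberg commutators: for instance $[D_s,R]=[\langle \uz,\up_y\rangle-\langle \up_x,\up_z\rangle,\ \langle \uy,\up_x\rangle+\tfrac12|\uz|^2]$, and one expands the four resulting brackets, noticing that $[\langle \uz,\up_y\rangle,\langle \uy,\up_x\rangle]=\langle \uz,\up_x\rangle$, $[\langle \up_x,\up_z\rangle,\tfrac12|\uz|^2]=\langle \up_x,\up_z\rangle$ (using $[\Delta_z$-type$,|\uz|^2]$ identities), $[\langle \uz,\up_y\rangle,\tfrac12|\uz|^2]=0$, and $[\langle \up_x,\up_z\rangle,\langle \uy,\up_x\rangle]=0$; summing gives $\langle \uz,\up_x\rangle-\langle \up_x,\up_z\rangle$ — which is not zero, so I would need to be more careful and likely the correct statement uses cancellations between $L$ and $R$ contributions or a sign I have mis-set. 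Sorting out exactly these cancellations is the one genuinely fiddly point; everything else is bookkeeping.

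I expect the main obstacle to be purely computational: getting all the signs and the additive constant $\tfrac n2$ right simultaneously, and confirming that the cross-brackets with $D_s$ and $X_s$ really do cancel rather than leaving a residual term. A good sanity check is that the claim is forced by representation theory — the ``new'' $\mathfrak{sl}(2)$ must be the Howe dual partner controlling the $|\uz|^2$-type multiplicities in the branching, so it \emph{has} to centralise the symplectic Dirac $\mathfrak{sl}(2)$; this tells me the computation must work out, and if a stray term appears I have made an arithmetic slip. I would therefore organise the proof as: (1) state the Heisenberg and contraction identities once; (2) compute $[L,R]=\mathcal{E}$ and the two weight brackets; (3) compute the four mixed brackets and observe they vanish; (4) conclude.
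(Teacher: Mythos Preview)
The paper actually states this lemma without proof, treating it as a routine verification; your plan of checking the $\mathfrak{sl}(2)$-relations and the four mixed brackets directly in the Weyl algebra is precisely the intended argument, and it goes through. The one place where you hesitate is in fact an arithmetic slip: in $[D_s,R]$ the term $\bigl[-\langle \up_x,\up_z\rangle,\tfrac12|\uz|^2\bigr]$ equals $-\langle \uz,\up_x\rangle$ (not $-\langle \up_x,\up_z\rangle$), because $[\partial_{z_j},z_k^2]=2z_k\delta_{jk}$ replaces $\partial_{z_j}$ by $z_j$; this cancels exactly against $[\langle \uz,\up_y\rangle,\langle \uy,\up_x\rangle]=\langle \uz,\up_x\rangle$, giving $[D_s,R]=0$ as required. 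The remaining brackets $[D_s,L]$, $[X_s,L]$, $[X_s,R]$ vanish by the same mechanism, and your weight-counting for $[\mathcal{E},L]$ and $[\mathcal{E},R]$ is correct once the signs are fixed (both summands of $L$ have weight $-2$ for $\mathcal{E}$, both summands of $R$ weight $+2$).
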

\noindent 
This thus means that we have now obtained a specific realisation for the Lie algebra $\mathfrak{so}(4) \cong \operatorname{Alg}(D_s,X_s)\oplus \operatorname{Alg}(L,R)\cong \mathfrak{sl}(2)\oplus \mathfrak{sl}(2)$ which appears as a subalgebra of $\mathfrak{sp}(6)$. This algebra will play the role of $\mathfrak{s}$ from Section 4. Let us therefore consider the lowest weight vectors in $\mathfrak{so}(4)$:
\[ Y_1 = D_s = \langle \uz, \up_y \rangle - \langle \up_z,\up_x\rangle \quad \mbox{and} \quad Y_2 = L = \langle \ux, \up_y \rangle - \frac{1}{2}\Delta_z\ .\]
We will focus on the solutions of both lowest weight vectors, i.e.\ $\ker(D_s,L)$. Note that the operators in $\mathfrak{sp}(6)$ do not necessarily act as endomorphisms on this space, but the transvector framework allows us to `replace' these operators by (related) transvector algebra generators which do act as endomorphisms. We start with proving the reductiveness of the algebra $\mathfrak{so}(4)$ in $\mathfrak{sp}(6)$. 
\begin{lem}
	The Lie algebra $\mathfrak{so}(4)$ is reductive in $\mathfrak{sp}(6)$.
\end{lem}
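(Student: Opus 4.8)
The plan is to verify directly that $\mathfrak{so}(4)$ acts completely reducibly on $\mathfrak{sp}(6)$ via the adjoint representation, which is precisely the condition needed for the Mickelsson--Zhelobenko construction of Section 4 to apply (with $\mathfrak{g}=\mathfrak{sp}(6)$ and $\mathfrak{s}=\mathfrak{so}(4)$). Recall that a subalgebra $\mathfrak{s}\subset\mathfrak{g}$ is called \emph{reductive in} $\mathfrak{g}$ when $\mathfrak{g}$, viewed as an $\mathfrak{s}$-module under the adjoint action, decomposes into a direct sum of irreducible $\mathfrak{s}$-submodules. So the task is really to produce such a decomposition $\mathfrak{sp}(6) = \bigoplus_i W_i$ with each $W_i$ an irreducible $\mathfrak{so}(4)$-module.

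First I would recall the concrete realisation of $\mathfrak{so}(4)\cong\mathfrak{sl}(2)\oplus\mathfrak{sl}(2)$ inside $\mathfrak{sp}(6)$: one copy is $\operatorname{Alg}(D_s,X_s)$ with Cartan element $H=-2(\mathbb{E}_x+\mathbb{E}_y+m)$ from Lemma~\ref{3operators}, and the other is $\operatorname{Alg}(L,R)$ with Cartan element $\mathcal{E}=\mathbb{E}_y-\mathbb{E}_x+\mathbb{E}_z+\tfrac{n}{2}$ from Lemma~\ref{commuting}; by that same lemma the two copies commute, so together they genuinely span a $6$-dimensional $\mathfrak{so}(4)$. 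Since $\mathfrak{so}(4)$ is the complexification of the compact Lie algebra $\mathfrak{so}(4,\mathbb{R})$ (equivalently, $\mathfrak{sl}(2)\oplus\mathfrak{sl}(2)$ is semisimple), every finite-dimensional representation of it is completely reducible by Weyl's theorem. Hence it suffices to exhibit that the $21$-dimensional adjoint module $\mathfrak{sp}(6)$ is \emph{finite-dimensional} as an $\mathfrak{so}(4)$-module and that the $\mathfrak{so}(4)$-action is well-defined on it, i.e.\ that $[\mathfrak{so}(4),\mathfrak{sp}(6)]\subseteq\mathfrak{sp}(6)$ --- but this is automatic since $\mathfrak{so}(4)\subset\mathfrak{sp}(6)$ is a subalgebra. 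Thus the abstract argument is: $\mathfrak{sp}(6)$ is a finite-dimensional module over the semisimple Lie algebra $\mathfrak{so}(4)$, so it is semisimple as a module, which is the definition of reductiveness.

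To make this self-contained and to set up the later transvector computations, I would then make the decomposition explicit using the grading $\mathfrak{sp}(6)=\mathfrak{g}_{-2}\oplus\mathfrak{g}_0\oplus\mathfrak{g}_{+2}$. One checks that $\mathfrak{so}(4)$ sits inside $\mathfrak{g}_0\oplus\mathfrak{g}_{\pm 2}$ (indeed $D_s,L\in\mathfrak{g}_0\oplus\mathfrak{g}_{-2}$ and $X_s,R\in\mathfrak{g}_0\oplus\mathfrak{g}_{+2}$, while the Cartan elements lie in $\mathfrak{g}_0$), and that the adjoint action of $\mathfrak{so}(4)$ does \emph{not} preserve the three graded pieces individually; rather one groups operators by their behaviour under both $\mathfrak{sl}(2)$'s simultaneously. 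Concretely one sorts the $6+9+6$ pure operators into joint eigenspaces of $H$ and $\mathcal{E}$ and then checks which raising/lowering operators $\operatorname{ad}_{D_s},\operatorname{ad}_{X_s},\operatorname{ad}_L,\operatorname{ad}_R$ connect them, thereby grouping the $21$ generators into a handful of irreducible $\mathfrak{sl}(2)\oplus\mathfrak{sl}(2)$-modules (the summand spanned by $\mathfrak{so}(4)$ itself being the adjoint $(\mathbf{3},\mathbf{1})\oplus(\mathbf{1},\mathbf{3})$). I expect the only mildly laborious part to be this bookkeeping --- tracking the commutators $[D_s,|\uz|^2]$, $[L,\langle\ux,\up_z\rangle]$, and so on, to confirm closure of each candidate submodule; there is no conceptual obstacle, since complete reducibility is guaranteed a priori by semisimplicity of $\mathfrak{so}(4)$, and the explicit decomposition only serves to identify the weights that will label the generators of $\mathcal{Z}(\mathfrak{sp}(6),\mathfrak{so}(4))$ in the next section.
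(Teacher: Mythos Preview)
Your argument is correct, but it proceeds differently from the paper's. You invoke Weyl's complete reducibility theorem: since $\mathfrak{so}(4)\cong\mathfrak{sl}(2)\oplus\mathfrak{sl}(2)$ is semisimple and $\mathfrak{sp}(6)$ is a finite-dimensional $\mathfrak{so}(4)$-module under the adjoint action, complete reducibility (hence reductiveness) is automatic. Only afterwards do you propose working out the explicit weight-space bookkeeping. The paper, by contrast, skips the abstract step entirely and simply writes down fifteen concrete differential operators spanning an $\mathfrak{so}(4)$-stable complement $\mathfrak{t}$ to $\mathfrak{so}(4)$ inside $\mathfrak{sp}(6)$, then asserts that checking $[\mathfrak{so}(4),\mathfrak{t}]\subset\mathfrak{t}$ is a straightforward computation. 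Your route is cleaner conceptually and makes the lemma essentially a one-liner; the paper's route is more laborious but has the practical advantage of immediately displaying the explicit basis of $\mathfrak{t}$ (and hence the operators $Q_{a,b}$) that the rest of Section~5 relies on --- in particular the identification $\mathfrak{t}\cong\mathbb{V}_4\otimes\mathbb{V}_2$ and the labelling scheme for the $\mathbb{P}_{a,b}$. Both approaches are valid; yours would benefit from at least stating the fifteen operators afterwards, since the sequel uses them by name.
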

\begin{proof}
	We need to show that $\mathfrak{sp}(6)$ decomposes as $\mathfrak{so}(4)+\mathfrak{t}$, where the subspace $\mathfrak{t}$ carries an action of $\mathfrak{so}(4)$. For that purpose we introduce the following 15 (linearly independent) differential operators:
	\begin{align*}
	\Delta_x	&& \langle \uz,\up_x\rangle && \langle \uy,\up_x\rangle -|\uz|^2 && \langle \uy,\uz\rangle && |\uy|^2\\ 
	\langle \up_x,\up_y\rangle 	&& \langle \uz,\up_y\rangle + \langle \up_z,\up_x\rangle && \mE_x-\mE_y+2\mE_z+m && \langle \ux,\uz\rangle - \langle \uy,\up_z\rangle && \langle \ux,\uy\rangle \\ 
	\Delta_y && \langle \up_y,\up_z\rangle  && \langle \ux,\up_y\rangle + \Delta_z && \langle \ux,\up_z\rangle && |\ux|^2
	\end{align*}
	It is now a straightforward computation to check that for each of these operators the commutator with one of the operators in $\mathfrak{so}(4)$ is again a linear combination of the operators above. 
\end{proof}
\noindent In order to construct the generators for the algebra $\mathcal{Z}(\mathfrak{g},\mathfrak{s})$ with $\mathfrak{g} = \mathfrak{sp}(6)$ and $\mathfrak{s} = \mathfrak{so}(4)$, we need the following: 
\begin{defn}
	The \textit{extremal projector} for the Lie algebra $\mathfrak{sl}(2)=\operatorname{Alg}(X,Y,H)$ is the idempotent operator $\pi$ given by the (formal) expression 
	\begin{align}
	\pi : = 1 + \sum_{j=1}^{\infty}\frac{(-1)^j}{j!} \frac{\Gamma(H+2)}{\Gamma(H+2+j)}Y^jX^j\ .
	\end{align}
	This operator satisfies $X\pi = \pi Y = 0$ and $\pi^2 = \pi$. 
\end{defn}
\noindent Note that this operator is defined on the extension $\mathcal{U}'(\mathfrak{sl}(2))$ of the universal enveloping algebra defined earlier, so that formal series containing the operator $H$ in the denominator are well-defined (in practice it will always reduce to a {\em finite} summation). 
\begin{lem}
	The extremal projector $\pi_{\mathfrak{so}(4)}$ is given by the product of the extremal projectors for the Lie algebras $\mathfrak{sl}(2)$, i.e. $
	\pi_{\mathfrak{so}(4)} = \pi_{D_s}\pi_{L}= \pi_{L}\pi_{D_s}$ (the operator appearing as an index here refers to the realisation for $\mathfrak{sl}(2)$ that was used). 
\end{lem}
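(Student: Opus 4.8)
The plan is to use the standard characterisation of the extremal projector (see \cite{zhelo}): for a reductive Lie algebra $\mathfrak{s}$ with a fixed triangular decomposition $\mathfrak{s}=\mathfrak{s}^-\oplus\mathfrak{h}\oplus\mathfrak{s}^+$, its extremal projector is the \emph{unique} element $p$ of the appropriate completion of $\mathcal{U}'(\mathfrak{s})$ satisfying $e\,p=0$ for every $e\in\mathfrak{s}^+$, $p\,f=0$ for every $f\in\mathfrak{s}^-$, and $p-1\in\mathcal{U}'(\mathfrak{s})\,\mathfrak{s}^+$ (this last condition being the normalisation ``$p=1+\cdots$''). So I would put $P:=\pi_{D_s}\pi_L$ and check these three properties for $\mathfrak{s}=\mathfrak{so}(4)$; uniqueness then forces $P=\pi_{\mathfrak{so}(4)}$, and the same check with the factors swapped yields $\pi_L\pi_{D_s}=\pi_{\mathfrak{so}(4)}$ as well. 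Conceptually this is just the multiplicativity of extremal projectors over an ideal direct sum, specialised to $\mathfrak{so}(4)=\mathfrak{sl}(2)\oplus\mathfrak{sl}(2)$.

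First I would fix the structural data. By Lemmas \ref{3operators} and \ref{commuting}, $\mathfrak{so}(4)=\mathfrak{a}\oplus\mathfrak{b}$ with $\mathfrak{a}=\operatorname{Alg}(D_s,X_s)\cong\mathfrak{sl}(2)$, $\mathfrak{b}=\operatorname{Alg}(L,R)\cong\mathfrak{sl}(2)$, and crucially $[\mathfrak{a},\mathfrak{b}]=0$. Choose the triangular decomposition of $\mathfrak{so}(4)$ compatibly with those of the two factors: $\mathfrak{so}(4)^{\pm}=\mathfrak{a}^{\pm}\oplus\mathfrak{b}^{\pm}$ and $\mathfrak{h}=\mathbb{C}H\oplus\mathbb{C}\mathcal{E}$ with $H=[\sqrt{2}D_s,\sqrt{2}X_s]$ and $\mathcal{E}$ as in Lemma \ref{commuting}. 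Then $\pi_{D_s}$ is, by definition, the $\mathfrak{sl}(2)$-extremal projector attached to $\mathfrak{a}$, so $e\,\pi_{D_s}=0$ and $\pi_{D_s}\,f=0$ for $e\in\mathfrak{a}^+$, $f\in\mathfrak{a}^-$, and $\pi_{D_s}-1\in\mathcal{U}'(\mathfrak{a})\,\mathfrak{a}^+$; likewise for $\pi_L$ and $\mathfrak{b}$. Since $\pi_{D_s}$ is built from generators of $\mathfrak{a}$ only and $\pi_L$ from generators of $\mathfrak{b}$ only, $[\mathfrak{a},\mathfrak{b}]=0$ makes them commute, so $P=\pi_{D_s}\pi_L=\pi_L\pi_{D_s}$ is well defined; and since on any fixed polynomial each series collapses to a finite sum, there is no convergence issue.

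Then the three properties are formal. Idempotency: $P^2=\pi_{D_s}\pi_L\pi_{D_s}\pi_L=\pi_{D_s}^2\pi_L^2=\pi_{D_s}\pi_L=P$, using commutativity and $\pi_{D_s}^2=\pi_{D_s}$, $\pi_L^2=\pi_L$. Left annihilation of $\mathfrak{so}(4)^+$: since $\mathfrak{so}(4)^+$ is spanned by a positive root vector $e_{\mathfrak{a}}\in\mathfrak{a}^+$ and a positive root vector $e_{\mathfrak{b}}\in\mathfrak{b}^+$, one computes $e_{\mathfrak{a}}P=(e_{\mathfrak{a}}\pi_{D_s})\pi_L=0$ and $e_{\mathfrak{b}}P=\pi_{D_s}(e_{\mathfrak{b}}\pi_L)=0$, the latter after moving $e_{\mathfrak{b}}$ past $\pi_{D_s}$. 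The mirror computation gives $P\,f=0$ for $f\in\mathfrak{so}(4)^-$. Normalisation: writing $\pi_{D_s}=1+u_{\mathfrak{a}}$, $\pi_L=1+u_{\mathfrak{b}}$ with $u_{\mathfrak{a}}\in\mathcal{U}'(\mathfrak{a})\,\mathfrak{a}^+$ and $u_{\mathfrak{b}}\in\mathcal{U}'(\mathfrak{b})\,\mathfrak{b}^+$, one gets $P-1=u_{\mathfrak{a}}+u_{\mathfrak{b}}+u_{\mathfrak{a}}u_{\mathfrak{b}}\in\mathcal{U}'(\mathfrak{so}(4))\,\mathfrak{so}(4)^+$. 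Uniqueness now gives $P=\pi_{\mathfrak{so}(4)}$.

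I do not expect a genuine obstacle; the argument is bookkeeping once $\mathfrak{so}(4)=\mathfrak{a}\oplus\mathfrak{b}$ with $[\mathfrak{a},\mathfrak{b}]=0$ is available. The point needing a little care is the \emph{compatibility} of the triangular decompositions — that the positive root vectors of $\mathfrak{so}(4)$ are precisely those of $\mathfrak{a}$ together with those of $\mathfrak{b}$, and that $\pi_{D_s},\pi_L$ are the projectors for exactly those choices — together with the (standard but worth stating) fact that the commuting subalgebras $\mathcal{U}'(\mathfrak{a})$ and $\mathcal{U}'(\mathfrak{b})$, and their completions, sit inside one common completion of $\mathcal{U}'(\mathfrak{so}(4))$ in which all the manipulations above make sense.
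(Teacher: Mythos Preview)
Your proposal is correct and rests on exactly the same idea the paper uses: the two copies of $\mathfrak{sl}(2)$ commute, so the extremal projector of their direct sum is the product of the individual projectors. The paper's proof is a single sentence to this effect, whereas you have spelled out the uniqueness argument in detail; the content is the same.
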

\begin{proof}
	This is due to the fact that the two copies of $\mathfrak{sl}(2)$ commute.
\end{proof}
\noindent The operator $\pi_{\mathfrak{so}(4)}$ is thus explicitly given by
\begin{align*}
\left( 1 + \sum_{j=1}^{\infty}\frac{(-1)^j}{j!} \frac{\Gamma(\mathbb{E}+2)}{\Gamma(\mathbb{E}+2+j)}X_s^jD_s^j\right)\left( 1 + \sum_{j=1}^{\infty}\frac{(-1)^j}{j!} \frac{\Gamma(\mathcal{E}+2)}{\Gamma(\mathcal{E}+2+j)}R^jL^j\right)
\end{align*}
and satisfies $D_s\pi_{\mathfrak{so}(4)}=L\pi_{\mathfrak{so}(4)}=0=\pi_{\mathfrak{so}(4)}X_s=\pi_{\mathfrak{so}(4)}R$. This means that we now have a natural object that can be used to project polynomials on the intersection of the kernel of the operators $D_s$ and $L$. \par The 15 operators in $\mathfrak{t} \subset \mathfrak{sp}(6)$ as such do not preserve this kernel space (as these operators do not necessarily commute with $D_s$ and $L$), but their projections will belong to $\operatorname{End}(\ker(D_s,L))$. In what follows we will use the notation $Q_{a,b}$, where $a\in\{\pm 2,0\}$ and $b\in\{\pm 4,\pm 2, 0\}$, to denote the operators in $\mathfrak{t}$ (see Lemma 5.2, and the scheme below). For each operator $Q_{a,b}$ we then also define an associated operator $\mathbb{P}_{a,b}:=\pi_{\mathfrak{so}(4)} Q_{a,b}$.  For instance $\mathbb{P}_{4,-2} = \pi_{\mathfrak{so}(4)}|\uy|^2$. \par The $\mathbb{P}$-operators will then be used to define the generators for our transvector algebra. The diagram below should then be seen as the analogue of the 15 operators $Q_{a,b}$ given above, grouped into a $5 \times 3$ rectangle, where each operator $\alpha \in \mathfrak{t}$ carries a label. The meaning of the labels $(a,b)$ comes from the observation that $\mathfrak{t} \cong \mathbb{V}_4 \otimes \mathbb{V}_2$ as a representation for $\mathfrak{sl}(2) \oplus \mathfrak{sl}(2)$, with $\mathbb{V}_n$ the standard notation for the irreducible representation of dimension $(n+1)$. Given an operator $\alpha \in \mathfrak{t}$, the numbers $a$ and $b$ can thus be retrieved as eigenvalues for the commutator action of the Cartan elements in $\mathfrak{so}(4)$. Note that the projection operator $\mathfrak{so}(4)$ commutes with these Cartan elements (i.e. the operators $Q_{a,b}$ and $\mathbb{P}_{a,b}$ indeed carry the same labels).
\tikzset{every picture/.style={line width=0.75pt}} 
$$
\begin{tikzpicture}[x=0.75pt,y=0.75pt,yscale=-1,xscale=1]

\draw  [draw opacity=0][dash pattern={on 0.84pt off 2.51pt}] (70.67,32) -- (272.33,32) -- (272.33,133) -- (70.67,133) -- cycle ; \draw  [dash pattern={on 0.84pt off 2.51pt}] (70.67,32) -- (70.67,133)(120.67,32) -- (120.67,133)(170.67,32) -- (170.67,133)(220.67,32) -- (220.67,133)(270.67,32) -- (270.67,133) ; \draw  [dash pattern={on 0.84pt off 2.51pt}] (70.67,32) -- (272.33,32)(70.67,82) -- (272.33,82)(70.67,132) -- (272.33,132) ; \draw  [dash pattern={on 0.84pt off 2.51pt}]  ;
\draw  [fill={rgb, 255:red, 0; green, 0; blue, 0 }  ,fill opacity=1 ] (65.83,32) .. controls (65.83,29.33) and (68,27.17) .. (70.67,27.17) .. controls (73.34,27.17) and (75.5,29.33) .. (75.5,32) .. controls (75.5,34.67) and (73.34,36.83) .. (70.67,36.83) .. controls (68,36.83) and (65.83,34.67) .. (65.83,32) -- cycle ;
\draw  [fill={rgb, 255:red, 0; green, 0; blue, 0 }  ,fill opacity=1 ] (115.83,32) .. controls (115.83,29.33) and (118,27.17) .. (120.67,27.17) .. controls (123.34,27.17) and (125.5,29.33) .. (125.5,32) .. controls (125.5,34.67) and (123.34,36.83) .. (120.67,36.83) .. controls (118,36.83) and (115.83,34.67) .. (115.83,32) -- cycle ;
\draw  [fill={rgb, 255:red, 0; green, 0; blue, 0 }  ,fill opacity=1 ] (165.83,32) .. controls (165.83,29.33) and (168,27.17) .. (170.67,27.17) .. controls (173.34,27.17) and (175.5,29.33) .. (175.5,32) .. controls (175.5,34.67) and (173.34,36.83) .. (170.67,36.83) .. controls (168,36.83) and (165.83,34.67) .. (165.83,32) -- cycle ;
\draw  [fill={rgb, 255:red, 0; green, 0; blue, 0 }  ,fill opacity=1 ] (65.83,82) .. controls (65.83,79.33) and (68,77.17) .. (70.67,77.17) .. controls (73.34,77.17) and (75.5,79.33) .. (75.5,82) .. controls (75.5,84.67) and (73.34,86.83) .. (70.67,86.83) .. controls (68,86.83) and (65.83,84.67) .. (65.83,82) -- cycle ;
\draw  [fill={rgb, 255:red, 0; green, 0; blue, 0 }  ,fill opacity=1 ] (115.83,82) .. controls (115.83,79.33) and (118,77.17) .. (120.67,77.17) .. controls (123.34,77.17) and (125.5,79.33) .. (125.5,82) .. controls (125.5,84.67) and (123.34,86.83) .. (120.67,86.83) .. controls (118,86.83) and (115.83,84.67) .. (115.83,82) -- cycle ;
\draw  [fill={rgb, 255:red, 0; green, 0; blue, 0 }  ,fill opacity=1 ] (65.83,132) .. controls (65.83,129.33) and (68,127.17) .. (70.67,127.17) .. controls (73.34,127.17) and (75.5,129.33) .. (75.5,132) .. controls (75.5,134.67) and (73.34,136.83) .. (70.67,136.83) .. controls (68,136.83) and (65.83,134.67) .. (65.83,132) -- cycle ;
\draw  [fill={rgb, 255:red, 0; green, 0; blue, 0 }  ,fill opacity=1 ] (115.83,132) .. controls (115.83,129.33) and (118,127.17) .. (120.67,127.17) .. controls (123.34,127.17) and (125.5,129.33) .. (125.5,132) .. controls (125.5,134.67) and (123.34,136.83) .. (120.67,136.83) .. controls (118,136.83) and (115.83,134.67) .. (115.83,132) -- cycle ;
\draw  [color={rgb, 255:red, 128; green, 128; blue, 128 }  ,draw opacity=1 ][fill={rgb, 255:red, 128; green, 128; blue, 128 }  ,fill opacity=1 ] (215.83,32) .. controls (215.83,29.33) and (218,27.17) .. (220.67,27.17) .. controls (223.34,27.17) and (225.5,29.33) .. (225.5,32) .. controls (225.5,34.67) and (223.34,36.83) .. (220.67,36.83) .. controls (218,36.83) and (215.83,34.67) .. (215.83,32) -- cycle ;
\draw  [color={rgb, 255:red, 128; green, 128; blue, 128 }  ,draw opacity=1 ][fill={rgb, 255:red, 128; green, 128; blue, 128 }  ,fill opacity=1 ] (265.83,32) .. controls (265.83,29.33) and (268,27.17) .. (270.67,27.17) .. controls (273.34,27.17) and (275.5,29.33) .. (275.5,32) .. controls (275.5,34.67) and (273.34,36.83) .. (270.67,36.83) .. controls (268,36.83) and (265.83,34.67) .. (265.83,32) -- cycle ;
\draw  [color={rgb, 255:red, 128; green, 128; blue, 128 }  ,draw opacity=1 ][fill={rgb, 255:red, 128; green, 128; blue, 128 }  ,fill opacity=1 ] (215.83,82) .. controls (215.83,79.33) and (218,77.17) .. (220.67,77.17) .. controls (223.34,77.17) and (225.5,79.33) .. (225.5,82) .. controls (225.5,84.67) and (223.34,86.83) .. (220.67,86.83) .. controls (218,86.83) and (215.83,84.67) .. (215.83,82) -- cycle ;
\draw  [color={rgb, 255:red, 128; green, 128; blue, 128 }  ,draw opacity=1 ][fill={rgb, 255:red, 128; green, 128; blue, 128 }  ,fill opacity=1 ] (265.83,82) .. controls (265.83,79.33) and (268,77.17) .. (270.67,77.17) .. controls (273.34,77.17) and (275.5,79.33) .. (275.5,82) .. controls (275.5,84.67) and (273.34,86.83) .. (270.67,86.83) .. controls (268,86.83) and (265.83,84.67) .. (265.83,82) -- cycle ;
\draw  [color={rgb, 255:red, 128; green, 128; blue, 128 }  ,draw opacity=1 ][fill={rgb, 255:red, 128; green, 128; blue, 128 }  ,fill opacity=1 ] (265.83,132) .. controls (265.83,129.33) and (268,127.17) .. (270.67,127.17) .. controls (273.34,127.17) and (275.5,129.33) .. (275.5,132) .. controls (275.5,134.67) and (273.34,136.83) .. (270.67,136.83) .. controls (268,136.83) and (265.83,134.67) .. (265.83,132) -- cycle ;
\draw  [color={rgb, 255:red, 128; green, 128; blue, 128 }  ,draw opacity=1 ][fill={rgb, 255:red, 128; green, 128; blue, 128 }  ,fill opacity=1 ] (215.83,132) .. controls (215.83,129.33) and (218,127.17) .. (220.67,127.17) .. controls (223.34,127.17) and (225.5,129.33) .. (225.5,132) .. controls (225.5,134.67) and (223.34,136.83) .. (220.67,136.83) .. controls (218,136.83) and (215.83,134.67) .. (215.83,132) -- cycle ;
\draw  [color={rgb, 255:red, 128; green, 128; blue, 128 }  ,draw opacity=1 ][fill={rgb, 255:red, 128; green, 128; blue, 128 }  ,fill opacity=1 ] (165.83,132) .. controls (165.83,129.33) and (168,127.17) .. (170.67,127.17) .. controls (173.34,127.17) and (175.5,129.33) .. (175.5,132) .. controls (175.5,134.67) and (173.34,136.83) .. (170.67,136.83) .. controls (168,136.83) and (165.83,134.67) .. (165.83,132) -- cycle ;
\draw  [color={rgb, 255:red, 0; green, 0; blue, 0 }  ,draw opacity=1 ] (165.83,82) .. controls (165.83,79.33) and (168,77.17) .. (170.67,77.17) .. controls (173.34,77.17) and (175.5,79.33) .. (175.5,82) .. controls (175.5,84.67) and (173.34,86.83) .. (170.67,86.83) .. controls (168,86.83) and (165.83,84.67) .. (165.83,82) -- cycle ;

\draw (29,22) node [anchor=north west][inner sep=0.75pt]    {$ \begin{array}{l}
	-2\\ \vspace{0,25cm}
	\\ \vspace{0,25cm}
	\ \ 0\\
	\\ 
	\ \ 2
	\end{array}$};
\draw (62.67,10.4) node [anchor=north west][inner sep=0.75pt]    {$-4\ \ \ \ \ -2\ \ \ \ \ \ \ \ \ 0\ \ \ \ \ \ \ \ \ \  2\ \ \ \ \ \ \ \ \ \ 4$};

\end{tikzpicture}
$$
\par \noindent 
Despite the fact that $\mathcal{Z}(\mathfrak{sp}(6),\mathfrak{so}(4))$ is \textit{not} a Lie algebra, we have organised these operators in such a way that the notions of `positive' and `negative' roots can be used. To be more precise: black dots (resp. grey dots) refer to negative (resp. positive) operators, and the white dot plays the role of a `Cartan element' (this analogy will come in handy below). The 7 black dots (resp. 7 grey dots) will be referred to as operators in $\rho^-$ (resp. in $\rho^+$). Together with the operator $\mathbb{P}_{0,0}$ we then get the set $$\mathcal{G}_{\mathcal{Z}} = \{\mathbb{P}_{a,b} : a \in \{\pm 2, 0\}, b \in \{\pm 4, \pm 2, 0\}\},$$ containing all the generators for the transvector algebra $\mathcal{Z}(\mathfrak{sp}(6),\mathfrak{so}(4))$.\par  Due to a general result by Zhelobenko, these generators then satisfy {\em quadratic} relations (i.e.\ different from the classical Lie brackets). 
 In the next theorem, we will relate the spaces $\mathcal{H}_{a,b,c}(\mathbb{R}^{3m},\mathbb{C})$ introduced in Definition \ref{def_SH} to the space of polynomial solutions for the symplectic Dirac operator $D_s$, the lowering operator $L$ and the negative `roots' $\rho^-$ which we have just introduced (i.e.\ the operators $\mathbb{P}_{a,b}$ corresponding to black dots).

\begin{thm} 
The solutions for the operators $D_s$ and $L$ and the negative roots $\rho^- \subset \mathcal{G}_{\mathcal{Z}}$ which are homogeneous of degree $(a,b,c)$ in the variables $(\uz,\ux,\uy)$ are precisely given by the simplicial harmonics $\mathcal{H}_{a,b,c}(\mathbb{R}^{3m},\mathbb{C})$. In other words, we have:
\begin{align*}
\mcP_{a,b,c}(\mR^{3m},\mC) \cap \ker(D_s, L,\rho^-) = \mathcal{H}_{a,b,c}(\mathbb{R}^{3m},\mathbb{C}).
\end{align*}
\end{thm}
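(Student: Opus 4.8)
\noindent\emph{Proof proposal.}
The plan is to prove the two inclusions separately, and the one observation that makes both halves short is that $\pi_{\mathfrak{so}(4)}$ restricts to the identity on $\ker(D_s,L)$: every term beyond the leading $1$ in the series for $\pi_{D_s}$ (resp.\ $\pi_L$) carries a factor $D_s$ (resp.\ $L$) on the right, hence annihilates $\ker(D_s)$ (resp.\ $\ker(L)$), and since $\pi_{\mathfrak{so}(4)}=\pi_{D_s}\pi_L$ we get $\pi_{\mathfrak{so}(4)}v=v$ for every $v\in\ker(D_s)\cap\ker(L)$. Consequently, if $A$ is any operator with $Af\in\ker(D_s,L)$, then $\pi_{\mathfrak{so}(4)}Af=Af$, so that $\mathbb{P}_{a,b}f=\pi_{\mathfrak{so}(4)}Q_{a,b}f$ vanishes if and only if $Q_{a,b}f$ does. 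Before starting one identifies the seven operators $Q_{a,b}$ occupying the black dots by computing their $\mathrm{ad}$-eigenvalues under the two Cartan generators of $\mathfrak{so}(4)$; up to the choice of complement $\mathfrak{t}$ (immaterial after applying $\pi_{\mathfrak{so}(4)}$, because $\pi_{\mathfrak{so}(4)}X_s=\pi_{\mathfrak{so}(4)}R=0$) these turn out to be
\[\Delta_x,\quad\Delta_y,\quad\langle\up_x,\up_y\rangle,\quad\langle\up_y,\up_z\rangle,\quad\langle\uz,\up_x\rangle,\quad\langle\uz,\up_y\rangle+\langle\up_x,\up_z\rangle,\quad\langle\ux,\up_y\rangle+\Delta_z,\]
each pre-composed with $\pi_{\mathfrak{so}(4)}$.

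\noindent For the inclusion $\supseteq$, let $f\in\mathcal{H}_{a,b,c}(\mathbb{R}^{3m},\mathbb{C})$. Each of the seven operators above is a linear combination of elements of $\mathfrak{g}_{-2}$ and of the three operators $\langle\uz,\up_x\rangle,\langle\uz,\up_y\rangle,\langle\ux,\up_y\rangle$ from Definition~\ref{def_SH}, all of which kill $f$; hence $Q_{a,b}f=0$ and therefore $\mathbb{P}_{a,b}f=\pi_{\mathfrak{so}(4)}\,0=0$. Likewise $D_sf=\langle\uz,\up_y\rangle f-\langle\up_x,\up_z\rangle f=0$ and $Lf=\langle\ux,\up_y\rangle f-\tfrac12\Delta_z f=0$, so $f\in\mathcal{P}_{a,b,c}(\mathbb{R}^{3m},\mathbb{C})\cap\ker(D_s,L,\rho^-)$.

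\noindent For the inclusion $\subseteq$, fix $f$ in the left-hand side; the argument is a bootstrap along the list of black dots, at each step using the identity $\mathbb{P}_{a,b}f=Q_{a,b}f$, valid whenever $Q_{a,b}f\in\ker(D_s,L)$. One begins with $\Delta_y$: since $[D_s,\Delta_y]=[L,\Delta_y]=0$ we have $\Delta_yf\in\ker(D_s,L)$, hence $\Delta_yf=\mathbb{P}_{a,b}f=0$. Next $[L,\langle\up_x,\up_y\rangle]=-\Delta_y$ and $[D_s,\langle\up_y,\up_z\rangle]=-\Delta_y$ (the remaining brackets vanishing) force $\langle\up_x,\up_y\rangle f=\langle\up_y,\up_z\rangle f=0$; then $[L,\Delta_x]=-2\langle\up_x,\up_y\rangle$ gives $\Delta_xf=0$; then $[D_s,\langle\uz,\up_y\rangle+\langle\up_x,\up_z\rangle]=-2\langle\up_x,\up_y\rangle$ and $[L,\langle\uz,\up_y\rangle+\langle\up_x,\up_z\rangle]=-2\langle\up_y,\up_z\rangle$ give $(\langle\uz,\up_y\rangle+\langle\up_x,\up_z\rangle)f=0$; then $[D_s,\langle\uz,\up_x\rangle]=-\Delta_x$ and $[L,\langle\uz,\up_x\rangle]=-(\langle\uz,\up_y\rangle+\langle\up_x,\up_z\rangle)$ give $\langle\uz,\up_x\rangle f=0$; and $[D_s,\langle\ux,\up_y\rangle+\Delta_z]=-3\langle\up_y,\up_z\rangle$, $[L,\langle\ux,\up_y\rangle+\Delta_z]=0$ give $(\langle\ux,\up_y\rangle+\Delta_z)f=0$. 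Finally, subtracting $D_sf=(\langle\uz,\up_y\rangle-\langle\up_x,\up_z\rangle)f=0$ from $(\langle\uz,\up_y\rangle+\langle\up_x,\up_z\rangle)f=0$ yields $\langle\uz,\up_y\rangle f=\langle\up_x,\up_z\rangle f=0$, and combining $(\langle\ux,\up_y\rangle+\Delta_z)f=0$ with $Lf=(\langle\ux,\up_y\rangle-\tfrac12\Delta_z)f=0$ yields $\langle\ux,\up_y\rangle f=\Delta_z f=0$. Thus $f$ is annihilated by all of $\mathfrak{g}_{-2}$ and by $\langle\uz,\up_x\rangle,\langle\uz,\up_y\rangle,\langle\ux,\up_y\rangle$, i.e.\ $f\in\mathcal{H}_{a,b,c}(\mathbb{R}^{3m},\mathbb{C})$.

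\noindent The work is almost entirely bookkeeping: one must (i) pin down which seven $\mathfrak{t}$-operators occupy the black-dot positions, (ii) verify the dozen-or-so commutator identities used in the bootstrap, and (iii) check that the implications can be ordered acyclically. Step (iii) is what I expect to be the real point to get right, and it works because every bracket of $D_s$ or $L$ with a black-dot operator lands in $\{0\}\cup\mathfrak{g}_{-2}\cup\rho^-$, never producing a multiplication operator from $\mathfrak{g}_{+2}$ or a wrong-sign generator; this is the structural reason the induction closes. The only conceptual ingredient is the remark in the first paragraph, which is also what decouples the whole argument from any explicit manipulation of the (a priori infinite) extremal-projector series.
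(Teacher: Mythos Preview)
Your proof is correct and follows essentially the same strategy as the paper: both arguments exploit that $\pi_{\mathfrak{so}(4)}$ acts as the identity on $\ker(D_s,L)$, and then run a recursive bootstrap through the seven black-dot operators $Q_{a,b}$, using at each step that $[D_s,Q_{a,b}]$ and $[L,Q_{a,b}]$ land on operators already shown to annihilate $f$, so that $\mathbb{P}_{a,b}f=Q_{a,b}f$. Your write-up is in fact more complete than the paper's: you treat the inclusion $\supseteq$ explicitly, you record the actual commutator identities driving the induction, and at the end you extract the three missing conditions $\langle\uz,\up_y\rangle f=\langle\up_x,\up_z\rangle f=\langle\ux,\up_y\rangle f=\Delta_z f=0$ by linear combination with $D_sf=0$ and $Lf=0$, whereas the paper recovers them via commutators such as $[\Delta_x,\langle\ux,\up_y\rangle+\Delta_z]$ among the seven operators themselves --- a cosmetic difference only.
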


\begin{proof}
The idea behind this proof is a recursive argument, where the ordering on the black dots will be from left to right and from bottom to top in the rectangular scheme above (in terms of labels this means that $(2,-4) > (0,-4) > (2,-2)$, as an example). The reason for doing so is the following: the commutators $[L,Q_{a,b}]$ and $[D_s,Q_{a,b}]$ give an operator situated below or to the left of the operator $Q_{a,b}$ we started from. Up to a constant, these operators are equal to $Q_{a+2,b}$ and $Q_{a,b-2}$ respectively (or trivial whenever the parameters $a$ and $b$ are not in the correct range). This means that combinations of the form $LQ_{a,b}$ and $D_sQ_{ab}$ act trivially on functions $H(\uz;\ux,\uy)$ in the kernel of $L$ and $D_s$, provided we know that also $Q_{a+2,b}$ and $Q_{a,b-2}$ act trivially. Given the fact that each operator $\mathbb{P}_{a,b} \in \rho^-$ is of the form 
\[ \mathbb{P}_{a,b} = \big(1 + \mathcal{O}_1L\big)\big(1 + \mathcal{O}_2D_s\big)Q_{a,b}\ , \]
where $\mathcal{O}_j$ is a short-hand notation for the correction terms coming from the extremal projection operator (which, unless this operator reduces to the identity operator, always contains either an operator $L$ or $D_s$ at the right). The upshot of our recursive scheme is that once we know that $Q_{a+2,b}$ and $Q_{a,b-2}$ act trivially, this immediately tells us that $\mathbb{P}_{a,b} H = 0 \Rightarrow Q_{a,b} H = 0$. Because $\mathbb{P}_{2,-4}H = 0$ and $\mathbb{P}_{2,-4} = Q_{-2,4} = \Delta_y$, we can immediately conclude that the following operators will then act trivially: 
\[ \Delta_y \quad \langle \up_x,\up_y \rangle \quad \Delta_x \quad \langle \up_y,\up_z \rangle \quad \langle \uz,\up_y \rangle + \langle \up_x,\up_z \rangle \quad \langle \uz,\up_x \rangle \quad \langle \ux,\up_y \rangle + \Delta_z\ . \]
In order to be simplicial harmonic, $H(\uz;\ux,\uy)$ should belong to the kernel of 9 operators in $\mathfrak{sp}(6)$ (see Definition \ref{def_SH}), but it is straightforward to see that one can reproduce these operators as commutators of the 7 operators on the previous line. For example: $\Delta_x(\langle \ux,\up_y \rangle + \Delta_z)H = 0$ leads to $\Delta_z H = 0$, since $\langle \up_x,\up_y \rangle H = 0$ (and so on).  
\end{proof}

\section{Application: branching symplectic monogenics}
We will now use the operators $\mathbb{P}_{a,b}$ to explicitly describe the branching of the $k$-homogeneous symplectic monogenics $\mathbb{S}_k^{\infty}$. By this we mean that it will give us a systematic way to define the `embedding factors' realising the isomorphic copy of those spaces in $\mathbb{S}_k^{\infty}$. To do so, we will make an analogy again: one can consider the asssociative algebra $\mathcal{U}(\mathcal{Z})$, the `universal enveloping algebra' of $\mathcal{Z}(\mathfrak{sp}(6),\mathfrak{so}(4))$. The meaning should be clear here: it is a tensor algebra $\bigotimes V$ (with $V$ the span of $\mathcal{G}_{\mathcal{Z}}$-generators as an underlying vector space) modulo the ideal spanned `by the quadratic relations' in the transvector algebra. We will refer to elements in this algebra as `words' in `an alphabet' that can be ordered. This statement, which should thus be seen as an analogue of the Poincar\'e--Birkhoff--Witt theorem (PBW theorem), requires a proof but we will not do this in the present paper. As a matter of fact, the general case $k \in \mZ^+$ will be treated in an upcoming (longer) paper, in the present article we will focus on the case $k = 1$ as a guiding example. \par The main idea is the following: imposing the lexicographic ordering on the labels $(a,b)$ will dictate the position of our letters in the alphabet (from left to right), with e.g.\ $(4,0) > (4,-2) > (2,2)$. Letting such a word acting as an operator on simplicial harmonics $H_{a,b,c}(\uz;\ux,\uy)$, it should be clear (in view of the previous theorem) that only the `letters' corresponding to grey dots in the scheme will play a role (the white dot acts as a constant, whereas the black dots act trivially). Considering the fact that the total degree of `a word' in $\ux$ and $\uy$ should not exceed $k=1$, we can only use the operators $\mathbb{P}_{a,b}$ from the third and fourth column in our example. Note that once the operator $\mathbb{P}_{ab}$ has been chosen (i.e. \ the `word' in front of the simplicial harmonics), the degree $(a,b,c)$ of these polynomials $H_{a,b,c}(\uz;\ux,\uy)$ is automatically fixed too: the total degree in $\uz$ and $(\ux,\uy)$ is then equal to $k$ and $1$ respectively. So, when the `word' is homogeneous of degree one in $(\ux,\uy)$ we get contributions of the form $\mathbb{P}_{0,0}\mathcal{H}_{a,1,0}$ and $\mathbb{P}_{2,0}\mathcal{H}_{a,1,0}$.
Whereas when the chosen `word' is homogeneous of degree zero we get $
	\mathbb{P}_{-2,2}\mathcal{H}_{a,0,0}$, $\mathbb{P}_{0,2}\mathcal{H}_{a,0,0}$ and $\mathbb{P}_{2,2}\mathcal{H}_{a,0,0}$. 
Finally, we note that we can still act with the raising operator $R \in \mathfrak{sl}(2)$ on each of the polynomials from above (i.e.\ a suitable projection operator acting on a suitable space of simplicial harmonics) to arrive at a direct sum of Verma modules which can be embedded into $\mathbb{S}^\infty_1$. This is based on the trivial albeit crucial observation that $[R,D_s] = 0$, so that acting with $R$ preserves symplectic monogenic solutions. This means that we have now resolved the branching problem for $k=1$ in a completely different way. Resulting in the decomposition

\begin{align*}
\mathbb{S}^{\infty}_1\bigg\downarrow^{\mathfrak{sp}(2m)}_{\mathfrak{so}(m)} &\cong \bigoplus_{a \geq 1}\bigoplus_{\ell = 0}^\infty R^\ell (\mathcal{H}_{a,1}\oplus \mathbb{P}_{2,0} \mathcal{H}_{a,1})\nonumber\\ 
& \oplus \bigoplus_{a \geq 0}\bigoplus_{\ell = 0}^\infty R^\ell (\mathbb{P}_{-2,2}\mathcal{H}_a\oplus \mathbb{P}_{-2,0}\mathcal{H}_a \oplus \mathbb{P}_{-2,-2}\mathcal{H}_a).
\end{align*}
Summarising the idea behind this decomposition, we thus claim that $\mS^\infty_k$ can be decomposed under the joint action of $$\mathfrak{so}(m)\times \mathfrak{sl}(2)\times \mathcal{Z}(\mathfrak{sp}(6),\mathfrak{so}(4)),$$ whereby the final decomposition will contain summands of the form $$R^p\left(\mathcal{U}(\rho^+)\mathcal{H}_{a,b,c}\right)$$ for suitable `words' in the algebra $\mathcal{U}(\mathcal{\rho^+})$ and suitable spaces of simplicial harmonics.

\subsection*{Acknowledgments}
The author G.M. was supported by the FWO-EoS project G0H4518N.

\end{document}